\newtheorem{theorem}{Theorem}[section]
\newtheorem{lemma}[theorem]{Lemma}
\theoremstyle{definition}
\newtheorem{Prop}[theorem]{Proposition}
\newtheorem{Cor}[theorem]{Corollary}
\newtheorem{remark}[theorem]{Remark}
\numberwithin{equation}{section}
\newcommand\bbn{{\mathbb N}}
\newcommand\bbz{{\mathbb Z}}
\newcommand\bbq{{\mathbb Q}}
\newcommand\aut{\mbox{Aut}}
\newcommand\ext{\mbox{Ext}\,}
\newcommand\Hom{\mathrm{Hom}}
\newcommand\udim{\mbox{\underline {dim}}}
\newcommand\ed{\mbox{End}}
\newcommand\mc{{\mathcal{C}}}
\newcommand\md{{\mathcal{D}}}
\newcommand\mood{{\text{mod}}}
\begin{document}

\title{Hall algebras for odd periodic triangulated categories}

\author{Fan Xu}
\author{Xueqing Chen}

\address{Department of Mathematics\\ Tsinghua University,
Beijing{\rm 100875},P.R.China} \email{fanxu@mail.tsinghua.edu.cn}

\address{Department of Mathematical and Computer Sciences\\University of Wisconsin--Whitewater\\
800 W. Main Street\\ Whitewater, WI. 53190. USA}
\email{chenx@uww.edu}

\thanks{Fan Xu was supported by
Alexander von Humboldt Stiftung and was also partially supported by
the Ph.D. Programs Foundation of Ministry of Education of China (No.
200800030058)}

\subjclass[2000]{Primary  16G20, 17B67; Secondary  17B35, 18E30}

\date{\today}

\keywords{periodic triangulated category, Hall algebra.}

\begin{abstract}
We define the Hall algebra associated to any triangulated category
under some finiteness conditions with the $t$-periodic translation
functor $T$ for odd $t>1.$ This generalizes the results in
\cite{Toen2005} and \cite{XX2006}.
\end{abstract}

\maketitle

\section{Introduction}
Hall algebras provided a framework involving the categorification
and the geometrization of Lie algebras and quantum groups in the
past two decades (see \cite{Lusztig2000, Nakajima1998, PX2000,
Ringel1990, Xiao97}). In a broad sense, a Hall algebra provides a
tool allowing one to code a category.

Let {$\mathcal{A}$} be a finitary category, i.e., a (small) abelian
category satisfying: $(1)$ $|\Hom (M,N)| <\infty $; $(2)$ $|\ext^1
(M,N)| <\infty$ for any $M,N \in \mathcal{A}$. Some typical examples
of finitary categories are the category of finite length modules
over some discrete valuation ring $R$ whose residue field $R/m$ is
finite, the category of finite dimensional representations of a
quiver $\mathcal{Q}$ over a finite field $k$ and the category of
coherent sheaves on some projective scheme over a finite field $k$.

The Hall algebra $\mathcal{H}(\mathcal{A})$ associated to a finitary
category $\mathcal{A}$ is an associative algebra, which, as a
$\bbq$-vector space, has a basis consisting of the isomorphism
classes $[X]$ for $X\in \mathcal{A}$ and has the multiplication
$[X]*[Y]=\sum_{[L]}\mathrm{g}_{XY}^L[L]$, where $X,Y,L \in
\mathcal{A}$ and $\mathrm{g}_{XY}^L=|\{ M \subset L| M \simeq X
\text{ and } L/M \simeq Y\}|$ is the structure constant related to
counting exact sequences $0 \rightarrow X \xrightarrow{f} L
\xrightarrow{g}Y \rightarrow 0$ and is called the \emph{Hall number}
(see \cite{Joyce}). Equivalently,
$$\mathrm{g}_{XY}^L=\displaystyle\frac{|\mathcal{M}(X,L)_{Y}|}{|\aut X|},$$ where $\mathcal{M}(X,L)_{Y}$ is the subset of
$\textrm{Hom}(X,L)$ consisting of monomorphisms $f: X
\hookrightarrow L$ whose cokernels $\textrm{Coker}(f)$ are
isomorphic to $Y$. Indeed, $\mathrm{g}_{XY}^L$ can be calculated as
follows. Define $$ E(X,Y;L)=\{(f,g)\in \Hom(X,L)\times \Hom(L,Y)\mid
$$ $$ 0 \rightarrow X \xrightarrow{f} L \xrightarrow{g}Y \rightarrow
0 \mbox{ is an exact sequence}\}.$$ The group $\aut X\times \aut Y$
acts freely on $E(X,Y;L)$ and the orbit of $(f,g)\in E(X,Y;L)$ is
denoted by $(f,g)^{\wedge}:=\{(af,gc^{-1})\mid (a,c)\in \aut X\times
\aut Y\}.$ If the orbit space is denoted by $O(X,Y;L)=\{
(f,g)^{\wedge}|(f,g) \in E(X,Y;L)\}$, then
$\mathrm{g}_{XY}^L=|O(X,Y;L)|$. The associativity of the Hall
algebra of a finitary category follows from pull--back and push--out
constructions. One can refer to \cite{Schiffmann1} and
\cite{Schiffmann2} for systematic introductions to this topic.

In the following, we shall describe briefly the current developments
of the theory of Hall algebras. The term ``Hall Algebra'' is due to
Ringel, as the generalization of ``algebra of partitions''
originally constructed in the context of abelian $p$-groups by
Hall~\cite{Hall}, which even has a trace back to the problems
considered by Steineiz~\cite{Steinitz}. In the early 90's, Ringel
defined the Hall algebra associated to an abelian category
in~\cite{Ringel1990}. In particular, when the abelian category
$\mathcal{A}$ is the module category of finitely generated modules
of a finite dimensional hereditary algebra $\Lambda$ over a finite
field, the positive part of Drinfeld-Jimbo's quantum group $\bf{U}_q
(\mathfrak{g})$ of the Kac--Moody Lie algebra $\mathfrak{g}$ is
obtained as the (generic, twisted) Hall algebra of $\mathcal{A}$ if
$\Lambda$ and $\mathfrak{g}$ share the same diagram
(see~\cite{Ringel1990,Green}). The theory of Hall algebras is
closely related to representation theory and algebraic geometry.
Note that due to this link established between the geometry of
quiver representations and quantum groups that
Lusztig~\cite{Lusztig} developed his theory of canonical basis.
Kashiwara~\cite{Kashiwara} constructed independently such bases
(called the crystal basis) by combinatorial methods. In studying a
Hall algebra associated to the category of coherent sheaves on the
projective line, Kapranov~\cite{Kap1} opened a new direction in the
theory. One can extend his study to the case of elliptic curves and
of surfaces.

A Hall algebra was originally made from an abelian category. After
Ringel's discovery, as a generalization, some attempts to strengthen
the relationship between the Hall algebra over some abelian category
and the quantum group of some Lie algebra have led to the problem of
associating some kinds of Hall algebras to categories which are
triangulated rather than abelian. One may also naturally ask the
following question (see~\cite{Ringel1990}): how to recover the whole
Lie algebra and the whole quantum group? A direct way is to use the
reduced Drinfeld double to glue together two Borel parts as shown by
Xiao~\cite{Xiao97}. However, this construction is not intrinsic,
that is, not naturally induced by the module category of
corresponding hereditary algebra $\Lambda$. Therefore, one needs to
replace the module category by a larger category. It was first
pointed out by Xiao~\cite{Xiao95}, cf. also~\cite{Kap2}, that an
extension of the construction of the Hall algebra to the derived
category of a finite dimensional hereditary algebra $\Lambda$ (which
is also a triangulated category) might yield the whole quantum
group. Unfortunately, as Kapranov pointed out, a direct mimicking of
the Hall algebra construction, but with triangles replacing exact
sequences, fails to give an associative multiplication. Therefore,
we can not use this way to recover the whole quantum groups or
enveloping algebras. However, Peng and Xiao~\cite{PX2000} defined an
analogous multiplication of Hall multiplication over the
$2$-periodic $k$-additive triangulated category for some finite
field $k$ with the cardinality $q$. A triangulated category is
periodic if the translation functor $T=[1]$ is periodic, i.e.,
$[1]^d\cong 1$ for some $d\in \bbn$. Although this multiplication
is, in general, not associative, the induced Lie bracket given by
the commutator of the nature Hall multiplication satisfies Jacobi
identity. This well-defined Hall type algebra provides the
realization of Kac-Moody Lie algebras. In \cite{Xu2007}, we
constructed a new multiplication over $\bbz/(q-1)$ for the
$2$-periodic triangulated category which is almost associative for
isomorphism classes of indecomposable objects. This refines the
construct of Peng and Xiao~\cite{PX2000}.

On the other hand, Kapranov~\cite{Kap2} defined an associative
algebra called lattice algebra associated to the derived category of
any hereditary category. This associative algebra can be viewed as
an analog of the Hall algebra associated to a finitary category.
Essentially, to define a Hall algebra is to provide a formula for
Hall numbers. However, there is no formula for defining a Hall
algebra from any triangulated category. Recently,
To\"{e}n~\cite{Toen2005} made a remarkable development in this
direction. By using model category and the fiber products of model
categories, To\"{e}n defined a ``derived Hall algebra''--an
associative algebra associated to a triangulated category
$\mathcal{T}$ with the translation functor $[1]$ which appears as
homotopy category of model category whose objects are modules over a
sufficiently finitary dg (differential graded) category over a
finite field $k$. He obtained an explicit formula for the structure
constant $\Phi_{XY}^L$ of an associative multiplication on the
rational vector space generated by the isomorphism classes of
$\mathcal{T}$. The structure constant $\Phi_{XY}^L$ is called the
derived Hall number and supplied by a formula of To\"en
\cite[Proposition 5.1]{Toen2005} as follows:
$$\Phi_{XY}^L=\displaystyle\frac{|(X,L)_{Y}|}{|\mathrm{Aut}X|}\cdot
\big(\prod_{i>0}\displaystyle\frac{|\Hom(X[i],
L)|^{(-1)^i}}{|\Hom(X[i], X)|^{(-1)^i}}\big),$$ where $X, Y, L \in
\mathcal{T}$ and $(X,L)_{Y}$ is the subset of $\textrm{Hom}(X,L)$
consisting of morphisms $l: X\rightarrow L$ whose mapping cones
$\textrm{Cone}(l)$ are isomorphic to $Y$. Indeed, the structure
constant $\Phi_{XY}^L$ is related to counting triangles
$X\xrightarrow{f}L\xrightarrow{g}Y\xrightarrow{h}X[1]$. Note that
the Hall algebras defined by Ringel~\cite{Ringel1990} and derived
Hall algebras defined by To\"en~\cite{Toen2005} provided the most
canonical examples of Hall algebras as a tool of the
categorification. To define the derived Hall algebra of any
triangulated category under some (homological) finiteness conditions
by using To\"en formula becomes a very interesting question
(\cite[Remark 5.3]{Toen2005}). In~\cite{XX2006}, the authors
extended derived Hall algebras to Hall algebras associated to
triangulated categories $\mathcal{T}$ with some homological
finiteness conditions and the To\"en formula has the following
variant:
$$\Phi_{XY}^L=\prod_{i>0}|\Hom(X[i], Y)|^{(-1)^i}\cdot \sum_{\alpha\in V(X, Y; L)}\frac{|\mathrm{End}(X_1(\alpha))|}{|\mathrm{Aut}(X_1(\alpha))|},$$
where we refer to Proposition 2.1 in Section 2 for the definitions
of $V(X, Y; L)$ and $X_1(\alpha).$ In~\cite{SX}, the authors proved
that associated to the derived category $\md(\mathcal{A})$ of a
hereditary $k$-category $\mathcal{A}$ over a finite field $k$, the
derived Hall algebra $\mathcal{DH}(\mathcal{A})$ defined by To\"{e}n
can be identified with the lattice algebra $L(\mathcal{A})$ defined
by Kapranov [23] via the ``twist and extend'' procedure through a
suitable subalgebra closely related to the Heisenberg double.
However, none of these methods can yet be applied to the periodic
triangulated category (in particular, $2$-periodic triangulated
category (root category)), as it does not satisfy the homological
finiteness assumptions.

The purpose of this paper is to define a Hall algebra, associated to
a triangulated category $\mathcal{C}$ with the $t$-periodic functor
for any odd number $t>1$. We prove an analogue of the formula of
To\"{e}n \cite[Proposition 5.1]{Toen2005} and for any $X,Y,L\in
\mathcal{C}$, the structure constant
$$F_{XY}^L=\displaystyle\frac{|(X,L)_{Y}|}{|\mathrm{Aut}X|}\cdot
\big(\prod_{i=1}^t\displaystyle\frac{|\Hom(X[i],
L)|^{(-1)^i}}{|\Hom(X[i], X)|^{(-1)^i}}\big)^{\frac{1}{2}}.$$

As an application, we describe the Hall algebra associated to the
$3$-periodic orbit category of a hereditary abelian category. Note
that the $t$-periodic triangulated categories do not satisfy the
required homological finiteness conditions appeared
in~\cite{Toen2005,XX2006} and our results can be viewed as
non-trivial generalization of To\"{e}n's result \cite{Toen2005} and
Xiao-Xu's result~\cite{XX2006}. It will be of interest to deal with
those Lie algebras which arise from a Hall algebra over an odd
periodic triangulated category.

\section{Hall algebras arising from odd-periodic triangulated categories}
Let $k$ be a finite field with $q$ elements and $\mc$ a $k$-additive
triangulated category with the translation (or shift) functor
$T=[1]$ satisfying the following conditions:
\begin{enumerate}
\item[(1)] the homomorphism space $\Hom(X,Y)$ for any two
objects $X$ and $Y$ in $\mc$ is a finite dimensional $k$-space;
\item[(2)] the endomorphism ring $\ed X$ for any indecomposable
object $X$ in $\mc$ is a finite dimensional local $k$-algebra;
\item[(3)] $T^t=[1]^t=[t] \cong 1_{\mc}$ for some positive integer $t$.
\end{enumerate}
Then the category $\mc$ is called a $t$-periodic triangulated
category and $T=[1]$ is called a $t$-periodic translation (or shift)
functor. Note that the first two conditions imply the validity of
the Krull--Schmidt theorem in $\mc$, which means that any object in
$\mc$ can be uniquely decomposed into the direct sum of finitely
many indecomposable objects up to isomorphism.

Throughout this paper, we will assume that $t$ is an odd number and
$t>1.$ For any $X, Y$ and $Z$ in $\mc$, we will use $fg$ to denote
the composition of morphisms $f:X\rightarrow Y$ and $g:Y\rightarrow
Z,$ and $|S|$ to denote the cardinality of a finite set $S.$

Given $X,Y;L\in \mc,$ put
$$ W(X,Y;L)=\{(f,g,h)\in \Hom(X,L)\times
\Hom(L,Y)\times \Hom(Y,X[1])\mid$$$$
X\xrightarrow{f}L\xrightarrow{g}Y\xrightarrow{h}X[1] \mbox{ is a
triangle}\}.$$ \\
There is a natural action of $\aut X\times \aut Y$ on $W(X,Y;L).$
The orbit of $(f,g,h)\in W(X,Y;L)$ is denoted by
$$(f,g,h)^{\wedge}:=\{(af,gc^{-1},ch(a[1])^{-1})\mid (a,c)\in \aut X\times \aut Y\}.$$
The orbit space is denoted by $V(X,Y;L)=\{ (f,g,h)^{\wedge}|(f,g,h)
\in W(X,Y;L)\} .$ The radical of $\Hom(X, Y)$ is denoted by
$\mathrm{radHo}m(X, Y)$ which is the set $$\{f\in \Hom(X, Y)\mid gfh
\mbox{ is not an isomorphism }\mbox{for any }g: A\rightarrow X\mbox{
and }
$$
$$h:Y\rightarrow A\mbox{ with } A\in \mc\mbox{ indecomposable }\}. $$ Denote by $(X,Y)_Z$ the subset of $\Hom(X,Y)$
consisting of the morphisms whose mapping cones are isomorphic to
$Z.$

The Proposition $2.5$ in~\cite{XX2006} also holds for $t$--periodic
triangulated categories.
\begin{Prop}\label{mainproposition}
For any $Z,L,M\in \mc,$ we have the following:
\begin{enumerate}
    \item any $\alpha=(l,m,n)^{\wedge}\in V(Z,L;M)$ has the representative
of the form:
\begin{equation}\nonumber
\xymatrix{Z\ar[rr]^{\left(%
\begin{array}{c}
  0 \\
  l_2 \\
\end{array}%
\right)}&& M\ar[rr]^{\left(
                       \begin{array}{cc}
                         0 & m_2 \\
                       \end{array}
                     \right)
}&& L\ar[rr]^{\left(%
\begin{array}{cc}
  n_{11} & 0 \\
  0 & n_{22} \\
\end{array}%
\right)}&& Z[1]}
\end{equation}
where $Z=Z_1(\alpha)\oplus Z_2(\alpha),$ $L=L_1(\alpha)\oplus
L_2(\alpha)$, $n_{11}$ is an isomorphism between $L_1(\alpha)$ and
$Z_1(\alpha)[1]$ and $n_{22}\in
\mathrm{radHom}(L_2(\alpha),Z_2(\alpha)[1])$;
    \item $$
\frac{|(M,L)_{Z[1]}|}{|\mathrm{Aut}L|}=\sum_{\alpha\in
V(Z,L;M)}\frac{|\mathrm{End}
L_1(\alpha)|}{|n\Hom(Z[1],L)||\mathrm{Aut}L_1(\alpha)|}
$$
and
$$
\frac{|(Z,M)_{L}|}{|\mathrm{Aut}Z|}=\sum_{\alpha\in
V(Z,L;M)}\frac{|\mathrm{End}
Z_1(\alpha)|}{|\Hom(Z[1],L)n||\mathrm{Aut}Z_1(\alpha)|}
$$
where $ n\Hom(Z[1],L)=\{ b\in \mathrm{End}L \mid  b=ns\ \mbox{ for
some }\ s\in \Hom(Z[1],L)\} $ and $\Hom(Z[1],L)n=\{d\in
\mathrm{End}Z[1]\mid d=sn  \mbox{ for some }\ s\in \Hom(Z[1],L) \}$.
\end{enumerate}
\end{Prop}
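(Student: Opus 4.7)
The plan is to follow the proof of Proposition 2.5 in \cite{XX2006}, observing that the argument there relies only on the Krull--Schmidt property, finiteness of $\Hom$-sets, and the triangulated category axioms---all granted by hypotheses (1)--(3)---so the proof carries over unchanged. No homological finiteness beyond this is invoked, and the periodicity $t$ itself plays no role in this particular proposition; it will only enter later when the translation functor is used to symmetrize the Hall numbers.

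For part (1), I would start from any triangle $Z \xrightarrow{l} M \xrightarrow{m} L \xrightarrow{n} Z[1]$ representing $\alpha$. Decomposing $L$ and $Z[1]$ into indecomposable summands by Krull--Schmidt, $n\colon L \to Z[1]$ becomes a matrix of morphisms between indecomposables. Since the endomorphism ring of every indecomposable is local by hypothesis (2), one can compose $n$ on the left with a suitable element of $\mathrm{Aut}\, L$ and on the right with a suitable element of $\mathrm{Aut}\, Z[1]$ to reduce $n$ to the block-diagonal form $\mathrm{diag}(n_{11}, n_{22})$, with $n_{11}\colon L_1 \to Z_1[1]$ an isomorphism and $n_{22} \in \mathrm{radHom}(L_2, Z_2[1])$. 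Because the triangle $Z_1 \to 0 \to L_1 \xrightarrow{n_{11}} Z_1[1]$ is contractible, the original triangle splits as its direct sum with $Z_2 \xrightarrow{l_2} M \xrightarrow{m_2} L_2 \xrightarrow{n_{22}} Z_2[1]$, forcing the claimed block forms on $l$ and $m$.

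For part (2), I would consider the projection $\pi\colon (Z, M)_L \to V(Z, L; M)$ sending $l$ to the $\mathrm{Aut}\, Z \times \mathrm{Aut}\, L$-orbit of any extension of $l$ to a triangle; this is well defined because any two completions of $l$ are related by an automorphism of $L$. Taking a representative $(l, m, n)$ in the canonical form of (1), I would compute $|\pi^{-1}(\alpha)|$ by orbit--stabilizer, solving the equations $al = l$, $mc^{-1} = m$, $cn(a[1])^{-1} = n$ block-by-block. On the $(Z_1, L_1)$ summand the isomorphism $n_{11}$ locks the $L_1$-component of $c$ to the shift of the $Z_1$-component of $a$, so that the pair count for admissible $(a_1, c_{11})$ is identified with $|\mathrm{End}\, Z_1(\alpha)|$ rather than $|\mathrm{Aut}\, Z_1(\alpha)|^2$; on the $(Z_2, L_2)$ summand the radical condition on $n_{22}$ forces cancellations whose net effect is the denominator $|\Hom(Z[1], L)n|$, recognized, via the long exact sequence obtained by applying $\Hom(Z, -)$ to the triangle, as the image of $s \mapsto sn$ in $\mathrm{End}\, Z[1]$. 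Summing over orbits and dividing by $|\mathrm{Aut}\, Z|$ yields the second formula. The first formula follows symmetrically from the projection $\pi'\colon (M, L)_{Z[1]} \to V(Z, L; M)$, $m \mapsto (l, m, n)^{\wedge}$; the analogous computation using $\Hom(-, L)$ then produces $|n\Hom(Z[1], L)|$ as the image of $s \mapsto ns$ in $\mathrm{End}\, L$, combined with $|\mathrm{End}\, L_1(\alpha)|$ coming from the $L_1$ side.

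The main obstacle is this stabilizer bookkeeping: one must correctly decouple the constraints on the invertible and radical blocks, and verify that the ambiguity groups $n\Hom(Z[1], L) \subseteq \mathrm{End}\, L$ and $\Hom(Z[1], L)n \subseteq \mathrm{End}\, Z[1]$ really arise as images of the connecting maps in the long exact sequences derived from the triangle and are actually subgroups whose cardinalities do not depend on the chosen representative in the orbit. Once these identifications are in place, the two formulas follow by direct substitution into the orbit--stabilizer count.
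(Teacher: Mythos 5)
Your proposal is correct and takes essentially the same route as the paper, which offers no proof at all for this statement beyond the remark that Proposition 2.5 of \cite{XX2006} carries over to the $t$-periodic setting; your observation that the argument there uses only Krull--Schmidt, finiteness of $\Hom$-sets, and the triangulated axioms (and not the homological finiteness conditions of \cite{XX2006}) is exactly the justification the paper implicitly relies on. Your sketch of the underlying diagonalization of $n$ and the orbit--stabilizer count is an accurate rendering of that cited proof.
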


\begin{lemma}\label{stablespace}
For any $(l,m,n)\in W(Z, L; M),$ we have
\begin{enumerate}
  \item $|n\Hom(Z[1], L)|=\big(\displaystyle\prod_{i=1}^t\displaystyle\frac{|\Hom(M[i], L)|^{(-1)^i}}
  {|\Hom(Z[i], L)|^{(-1)^i}|\Hom(L[i], L)|^{(-1)^i}}\big)^{\frac{1}{2}};$
  \item $|\Hom(Z[1], L)n|=\big(\displaystyle\prod_{i=1}^t\displaystyle\frac{|\Hom(Z[i], M)|^{(-1)^i}}
  {|\Hom(Z[i], L)|^{(-1)^i}|\Hom(Z[i], Z)|^{(-1)^i}}\big)^{\frac{1}{2}}.$
\end{enumerate}
\end{lemma}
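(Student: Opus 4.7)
The plan is to apply $\Hom$-functors to the defining triangle
\[Z \xrightarrow{l} M \xrightarrow{m} L \xrightarrow{n} Z[1],\]
use the $t$-periodicity $[t] \cong 1_{\mc}$ to close each resulting long exact sequence into a cyclic exact sequence of length $3t$, and exploit the oddness of $3t$ to derive a quadratic identity whose positive square root is the desired quantity.

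For part $(1)$, apply $\Hom(-, L)$ to the triangle to obtain the long exact sequence
\[\cdots \to \Hom(Z[i+1], L) \xrightarrow{(n[i])^{*}} \Hom(L[i], L) \xrightarrow{(m[i])^{*}} \Hom(M[i], L) \xrightarrow{(l[i])^{*}} \Hom(Z[i], L) \to \cdots,\]
in which $|n\Hom(Z[1], L)|$ is precisely the cardinality of the image of $(n)^{*}$ at $i = 0$. Call this image size $J_0$, and let $J_1, J_2, J_3, \ldots$ denote the sizes of the successive images reading leftwards along the sequence. Exactness yields the bilinear recursion $J_k J_{k+1} = |V_k|$, where $V_0, V_1, V_2, \ldots$ cycles through $\Hom(Z[i], L)$, $\Hom(M[i], L)$, $\Hom(L[i], L)$ for $i = 1, 2, \ldots, t$. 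The periodicity $[t] \cong 1_{\mc}$ closes the sequence into a cycle of length $3t$, so $J_{3t} = J_0$. The alternating product $\prod_{k=0}^{3t-1}(J_k J_{k+1})^{(-1)^k}$ telescopes to $J_0 \cdot J_{3t}^{(-1)^{3t-1}}$, which equals $J_0^{2}$ because $3t - 1$ is even. Hence $J_0^{2} = \prod_{k=0}^{3t-1}|V_k|^{(-1)^k}$, and grouping the right-hand side by type of Hom space together with the reindexing $i = \lfloor k/3\rfloor + 1$ recovers exactly the square of the formula in (1).

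For part $(2)$, apply the covariant functor $\Hom(Z[1], -)$ to the same triangle; the image of the induced map $(n)_{*} \colon \Hom(Z[1], L) \to \Hom(Z[1], Z[1])$ is precisely $\Hom(Z[1], L)n$. An identical cyclic-telescoping argument runs through the spaces $\Hom(Z[1], X[-k])$ with $X \in \{Z, M, L\}$ and $k = 0, 1, \ldots, t-1$, which simplify via the shift auto-equivalence to $\Hom(Z[k+1], X)$. Reindexing $i = k+1$ and invoking the same parity argument yields the square of the claimed right-hand side of (2).

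The main obstacle is the combinatorial bookkeeping of signs and indices as one walks $3t$ steps around the cyclic long exact sequence, and the verification that the telescoping regroups exactly into the alternating products stated in the lemma. Once the quadratic identities are in hand the square root is unambiguous, since $J_0$ is a positive integer and the right-hand side is thereby automatically a perfect square.
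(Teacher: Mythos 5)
Your proposal is correct and takes essentially the same approach as the paper: apply $\Hom(-,L)$ (resp.\ $\Hom(Z[1],-)$) to the triangle, use $[t]\cong 1_{\mc}$ to close the resulting long exact sequence into a periodic one of period $3t$, and exploit the oddness of $3t$ to conclude that the alternating product of the Hom-space cardinalities equals the \emph{square} of $|n\Hom(Z[1],L)|$. The paper phrases this as a finite exact sequence beginning and ending with $n\Hom(Z[1],L)$ (whose two end terms then occupy positions of equal parity) rather than via your explicit telescoping of $J_kJ_{k+1}=|V_k|$, but the computation is identical.
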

\begin{proof}
We only prove the first identity, the proof for the second identity
is similar. By applying the functor $\Hom(-, L)$ to the triangle
$Z\xrightarrow{l}M\xrightarrow{m}L\xrightarrow{n}Z[1],$ we obtain a
long exact sequence
$$
\xymatrix{\cdots \ar[r]&\Hom(Z[t+1], L) \ar[r]^-{n[t]^*}&\Hom(L[t],
L)\ar[r]&&
\\
&\cdots\ar[r]&\Hom(Z[1], L)\ar[r]^{n^*}&\Hom(L, L)\ar[r]&\cdots}
$$
Since $n\Hom (Z[1], L)=$ Image of $n^*$, $n[t]\Hom(Z[t+1], L)=$
Image of $n[t]^*$ and $[t] \cong 1_{\mc}$, the above long exact
sequence induces the following long exact sequence
$$
\xymatrix{0\ar[r]&n\Hom(Z[1], L)\ar[r]&\cdots\ar[r]&n\Hom(Z[1],
L)\ar[r]&0}
$$
which implies the desired identity.

\end{proof}

By Proposition~\ref{mainproposition} and Lemma~\ref{stablespace}, we
have the following immediate result.
\begin{Cor}\label{symmetry}
With the above notations, we have
$$\frac{|(M,L)_{Z[1]}|}{|\mathrm{Aut}L|}\cdot
\big(\prod_{i=1}^t\frac{|\Hom(M[i], L)|^{(-1)^i}}{|\Hom(L[i],
L)|^{(-1)^i}}\big)^{\frac{1}{2}}=\frac{|(Z,M)_{L}|}{|\mathrm{Aut}Z|}\cdot
\big(\prod_{i=1}^t\frac{|\Hom(Z[i], M)|^{(-1)^i}}{|\Hom(Z[i],
Z)|^{(-1)^i}}\big)^{\frac{1}{2}}.$$
\end{Cor}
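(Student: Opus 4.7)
The plan is to combine the two identities in Proposition~\ref{mainproposition}(2) by eliminating the common summation over $V(Z,L;M)$, then to substitute the explicit expressions from Lemma~\ref{stablespace} for $|n\Hom(Z[1],L)|$ and $|\Hom(Z[1],L)n|$ and rearrange the resulting factors.

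First, I would move the factor $|n\Hom(Z[1],L)|$ (respectively $|\Hom(Z[1],L)n|$) to the left-hand side of the first (respectively second) identity in Proposition~\ref{mainproposition}(2). This gives
\begin{equation*}
\frac{|(M,L)_{Z[1]}|}{|\mathrm{Aut}L|}\cdot |n\Hom(Z[1],L)|=\sum_{\alpha\in V(Z,L;M)}\frac{|\mathrm{End}\, L_1(\alpha)|}{|\mathrm{Aut}\, L_1(\alpha)|}
\end{equation*}
and the analogous expression with $Z_1(\alpha)$ on the right. The key observation, supplied by Proposition~\ref{mainproposition}(1), is that for each $\alpha\in V(Z,L;M)$ the morphism $n_{11}$ is an isomorphism $L_1(\alpha)\cong Z_1(\alpha)[1]$, so $|\mathrm{End}\, L_1(\alpha)|=|\mathrm{End}\, Z_1(\alpha)|$ and $|\mathrm{Aut}\, L_1(\alpha)|=|\mathrm{Aut}\, Z_1(\alpha)|$. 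Hence the two right-hand sides agree term by term, yielding
\begin{equation*}
\frac{|(M,L)_{Z[1]}|}{|\mathrm{Aut}L|}\cdot |n\Hom(Z[1],L)| = \frac{|(Z,M)_{L}|}{|\mathrm{Aut}Z|}\cdot |\Hom(Z[1],L)n|.
\end{equation*}

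Next I would invoke Lemma~\ref{stablespace} to replace $|n\Hom(Z[1],L)|$ and $|\Hom(Z[1],L)n|$ by their product expressions. Both expressions share a common factor of $\bigl(\prod_{i=1}^t|\Hom(Z[i],L)|^{(-1)^i}\bigr)^{-1/2}$ in the denominator; multiplying both sides of the equality by $\bigl(\prod_{i=1}^t|\Hom(Z[i],L)|^{(-1)^i}\bigr)^{1/2}$ cancels this common factor, leaving exactly the identity claimed in Corollary~\ref{symmetry}.

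There is no substantial obstacle: the corollary is a bookkeeping consequence of Proposition~\ref{mainproposition} and Lemma~\ref{stablespace}, and the only place where one must be careful is in verifying that the common factor $\prod_{i=1}^t|\Hom(Z[i],L)|^{(-1)^i}$ (raised to the power $1/2$) appears symmetrically in both halves so that it can be cleared. The $t$-periodicity of $[1]$ and the oddness of $t$ enter only implicitly through the validity of Lemma~\ref{stablespace}; once that lemma is granted, the corollary follows by the elementary manipulation just described.
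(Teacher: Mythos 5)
Your proposal is correct and is precisely the computation the paper intends: the paper states the corollary as an ``immediate result'' of Proposition~\ref{mainproposition} and Lemma~\ref{stablespace}, and your derivation---pulling the $\alpha$-independent factors $|n\Hom(Z[1],L)|$ and $|\Hom(Z[1],L)n|$ out of the sums, matching the two sums termwise via the isomorphism $n_{11}\colon L_1(\alpha)\cong Z_1(\alpha)[1]$, and cancelling the common factor $\bigl(\prod_{i=1}^t|\Hom(Z[i],L)|^{(-1)^i}\bigr)^{1/2}$---is exactly the bookkeeping that justifies it.
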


Recall some notations appeared in~\cite{XX2006}. Let $X,Y,Z, L, L'$
and $M$ be in $\mc$. Define
$$
\Hom(M\oplus X,L)^{Y,Z[1]}_{L'[1]}:=\{\left(%
\begin{array}{c}
  m \\
  f \\
\end{array}%
\right)\in \Hom (M\oplus X,L)\mid$$$$\hspace{3cm}
\mathrm{Cone}(f)\simeq Y, \mathrm{Cone}(m)\simeq Z[1] \mbox{ and }\mathrm{Cone}\left(%
\begin{array}{c}
  m \\
  f \\
\end{array}%
\right)\simeq L'[1]\}
$$
and
$$
\Hom(L',M\oplus X)^{Y,Z[1]}_{L}:=\{(f',-m')\in \Hom(L',M\oplus
X)\mid
$$$$\hspace{3cm}\mathrm{Cone}(f')\simeq Y, \mathrm{Cone}(m')\simeq Z[1]
\mbox{ and } \mathrm{Cone}(f',-m')\simeq L\}.
$$
Note that the orbit space of $\Hom(M\oplus X,L)^{Y,Z[1]}_{L'[1]}$
under the action of $\aut L$ is just the orbit space of
$\Hom(L',M\oplus X)^{Y,Z[1]}_{L}$  under the action of $\aut L'$
(see \cite{XX2006}), which is denoted by $V(L',L;M\oplus
X)_{Y,Z[1]}$.

If in Corollary \ref{symmetry}, we substitute $M$ by $M\oplus X$ and
 $Z$ by $L'$, then we can have the following result as an analog of
Corollary \ref{symmetry} by using a similar proof.
\begin{Cor}\label{symmetry2}
With the above notations, we have
$$\frac{|\Hom(M\oplus X,L)^{Y, Z[1]}_{L'[1]}|}{|\mathrm{Aut}L|}\cdot
\big(\prod_{i=1}^t\frac{|\Hom((M\oplus X)[i],
L)|^{(-1)^i}}{|\Hom(L[i],
L)|^{(-1)^i}}\big)^{\frac{1}{2}}$$$$=\frac{|\Hom(L',M\oplus
X)^{Y,Z[1]}_{L}|}{|\mathrm{Aut}L'|}\cdot
\big(\prod_{i=1}^t\frac{|\Hom(Z[i], M)|^{(-1)^i}}{|\Hom(Z[i],
Z)|^{(-1)^i}}\big)^{\frac{1}{2}}.$$
\end{Cor}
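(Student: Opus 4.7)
The strategy is to mimic the proof of Corollary~\ref{symmetry}, substituting $M\mapsto M\oplus X$ and $Z\mapsto L'$ and working with the refined counting sets. The first step is to establish a refined analog of Proposition~\ref{mainproposition}: each orbit $\alpha\in V(L',L;M\oplus X)_{Y,Z[1]}$ under $\aut L'\times \aut L$ admits a block-diagonal normal form, with $L'=L'_1(\alpha)\oplus L'_2(\alpha)$, $L=L_1(\alpha)\oplus L_2(\alpha)$, an isomorphism $L_1(\alpha)\simeq L'_1(\alpha)[1]$, and the remaining connecting component lying in the radical. One must check that this block decomposition is compatible with the refinement by the cone conditions $\mathrm{Cone}(m)\simeq Z[1]$ and $\mathrm{Cone}(f)\simeq Y$ on the individual components of $(m,f)^T$. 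This yields the pair of identities
$$\frac{|\Hom(M\oplus X,L)^{Y,Z[1]}_{L'[1]}|}{|\mathrm{Aut}L|}=\sum_{\alpha}\frac{|\mathrm{End}L_1(\alpha)|}{|n\Hom(L'[1],L)|\cdot|\mathrm{Aut}L_1(\alpha)|},$$
together with its mirror in which the left-hand side is $|\Hom(L',M\oplus X)^{Y,Z[1]}_L|/|\mathrm{Aut}L'|$, the denominator contains $|\Hom(L'[1],L)n|$, and the numerator involves $|\mathrm{End}L'_1(\alpha)|/|\mathrm{Aut}L'_1(\alpha)|$, summed over the same orbit space.

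In the second step I apply Lemma~\ref{stablespace}. Part (1), applied to the main triangle $L'\to M\oplus X\to L\to L'[1]$, evaluates $|n\Hom(L'[1],L)|$ as an Euler product; after the $|\Hom(L'[i],L)|^{(-1)^i}$ contribution cancels against its counterpart on the right-hand side of the identity, the claimed factor $\bigl(\prod_{i=1}^t|\Hom((M\oplus X)[i],L)|^{(-1)^i}/|\Hom(L[i],L)|^{(-1)^i}\bigr)^{1/2}$ emerges on the left. For the right-hand factor, the cone condition $\mathrm{Cone}(m)\simeq Z[1]$ supplies an auxiliary triangle $Z\to M\to L\to Z[1]$; Lemma~\ref{stablespace}(2) applied to this triangle produces $\bigl(\prod_{i=1}^t|\Hom(Z[i],M)|^{(-1)^i}/|\Hom(Z[i],Z)|^{(-1)^i}\bigr)^{1/2}$ after the common $|\Hom(Z[i],L)|^{(-1)^i}$ factor cancels.

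The final step equates the two orbit sums via the isomorphism $L_1(\alpha)\simeq L'_1(\alpha)[1]$, which forces $|\mathrm{End}L_1(\alpha)|=|\mathrm{End}L'_1(\alpha)|$ and $|\mathrm{Aut}L_1(\alpha)|=|\mathrm{Aut}L'_1(\alpha)|$, so the two expressions agree term by term and the desired identity drops out.

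The hardest part will be the first step, namely verifying that the normal-form decomposition of Proposition~\ref{mainproposition}(1) is preserved under the refinement by the cone types $Y$ and $Z[1]$ on the two components of $(m,f)^T$, and bookkeeping the interplay between the main triangle $L'\to M\oplus X\to L\to L'[1]$ and the subordinate triangle $Z\to M\to L\to Z[1]$ in the two separate applications of Lemma~\ref{stablespace}.
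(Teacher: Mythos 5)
Your overall strategy --- refine Proposition~\ref{mainproposition}(2) to the orbit space $V(L',L;M\oplus X)_{Y,Z[1]}$, evaluate the two denominators by Lemma~\ref{stablespace}, and match the two orbit sums via $L_1(\alpha)\simeq L'_1(\alpha)[1]$ --- is exactly the paper's intended argument, which is nothing more than the proof of Corollary~\ref{symmetry} rerun with $M$ replaced by $M\oplus X$ and $Z$ replaced by $L'$; your Steps 1 and 3 are fine. The genuine error is in Step 2, in how you produce the right-hand normalizing factor. Writing $\{A,B\}=\prod_{i=1}^t|\Hom(A[i],B)|^{(-1)^i}$ as in the proof of Theorem~\ref{maintheorem123}: the denominator in your mirror identity is $|\Hom(L'[1],L)n|$, where $n\colon L\to L'[1]$ is the connecting morphism of the \emph{main} triangle $L'\to M\oplus X\to L\to L'[1]$, and the only legitimate way to evaluate it is Lemma~\ref{stablespace}(2) applied to that same triangle, giving $\bigl(\{L',M\oplus X\}/(\{L',L\}\{L',L'\})\bigr)^{1/2}$. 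You instead apply Lemma~\ref{stablespace}(2) to the auxiliary triangle $Z\to M\to L\to Z[1]$, which computes the cardinality of a different set, namely $\Hom(Z[1],L)n'\subseteq\mathrm{End}(Z[1])$ for the connecting map $n'$ of that triangle; this quantity never appears in your counting identity. Moreover the cancellation you invoke cannot occur: after Lemma~\ref{stablespace}(1) the left-hand side of the Corollary reduces to $\{L',L\}^{1/2}\cdot\sum_\alpha |\mathrm{End}L_1(\alpha)|/|\mathrm{Aut}L_1(\alpha)|$, so the surviving factor is $\{L',L\}^{1/2}$, not $\{Z,L\}^{1/2}$, and the factor $\{Z,L\}^{1/2}$ you produce on the right has nothing to cancel against.

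The reason you were pushed into this detour is that the right-hand factor in the printed statement, $\bigl(\{Z,M\}/\{Z,Z\}\bigr)^{1/2}$, appears to be a misprint: carrying out the substitution $M\mapsto M\oplus X$, $Z\mapsto L'$ announced just before the Corollary yields $\bigl(\{L',M\oplus X\}/\{L',L'\}\bigr)^{1/2}$, and this is precisely the factor used when Corollary~\ref{symmetry2} is invoked in the proof of Theorem~\ref{maintheorem123}. With that corrected factor, your Steps 1 and 3 together with both parts of Lemma~\ref{stablespace} applied to the single triangle $L'\to M\oplus X\to L\to L'[1]$ finish the proof: both sides reduce to $\{L',L\}^{1/2}$ times the common orbit sum. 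The two candidate factors $\{L',M\oplus X\}/\{L',L'\}$ and $\{Z,M\}/\{Z,Z\}$ are not equal in general (one can already see this in degenerate cases such as $M=0$, $f=0$), so the identity as printed is not provable by these means; you should prove the substituted version rather than manufacture a $Z$-dependent factor from the subordinate triangle.
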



For any $Z, L$ and $M\in \mc$, set
\begin{displaymath}
\begin{array}{lrl}
F_{ZL}^M
&:=&\displaystyle\frac{|(M,L)_{Z[1]}|}{|\mathrm{Aut}L|}\cdot
\big(\prod_{i=1}^t \displaystyle\frac{|\Hom(M[i],
L)|^{(-1)^i}}{|\Hom(L[i],
L)|^{(-1)^i}}\big)^{\frac{1}{2}}\\&=&\displaystyle\frac{|(Z,M)_{L}|}{|\mathrm{Aut}Z|}\cdot
\big(\prod_{i=1}^t\displaystyle\frac{|\Hom(Z[i],
M)|^{(-1)^i}}{|\Hom(Z[i], Z)|^{(-1)^i}}\big)^{\frac{1}{2}}.
\end{array}
\end{displaymath}
This formula is an analogue of To\"en's formula~\cite[Proposition
5.1]{Toen2005}. Set $q=v^2$, then $F_{ZL}^M\in \bbq[v, v^{-1}].$ Let
$\bbq(v, v^{-1})$ be the rational field of $\bbq[v, v^{-1}].$ For
any $X \in \mc$, we denote its isomorphism class by $[X]$.

Now we can define an associative algebra arising from $\mc$ using
$F_{ZL}^M$ as the structure constant.

\begin{theorem}\label{maintheorem123}
Let $\mathcal{H}(\mc)$ be the $\bbq(v, v^{-1})$-space with the basis
$\{u_{[X]}\mid X\in \mc\}$. Endowed with the multiplication defined
by
$$
u_{[X]}*u_{[Y]}=\sum_{[L]}F_{XY}^L u_{[L]},$$ $\mathcal{H}(\mc)$ is
an associative algebra with the unit  $u_{[0]}.$

\end{theorem}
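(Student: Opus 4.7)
The plan is to establish the associativity identity
\[
\sum_{[L]} F_{XY}^L F_{LZ}^M \;=\; \sum_{[N]} F_{XN}^M F_{YZ}^N
\]
for all $X,Y,Z,M \in \mc$; the unit axiom $u_{[0]} * u_{[X]} = u_{[X]} = u_{[X]} * u_{[0]}$ is immediate from the direct calculation $F_{0X}^X = F_{X0}^X = 1$. The strategy mirrors the arguments of To\"en and Xiao--Xu: first reduce the identity, using the two equivalent forms of $F$, to a counting identity coming from the octahedral axiom, and then match the remaining correction factors using Corollary~\ref{symmetry2} together with the $t$-periodicity.

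First, I would unfold both sides by choosing compatible presentations of each factor. On the left, use the ``source'' form of $F_{XY}^L$ (via $|(X,L)_Y|/|\aut X|$) and the ``source'' form of $F_{LZ}^M$ (via $|(L,M)_Z|/|\aut L|$). On the right, use the ``source'' form of $F_{XN}^M$ (via $|(X,M)_N|/|\aut X|$) together with the ``target'' form of $F_{YZ}^N$ (via $|(N,Z)_{Y[1]}|/|\aut Z|$). Each product then splits canonically as a numerator counting pairs of composable morphisms with prescribed cones, divided by automorphism orders, times a square-root correction product of $\Hom$-cardinalities indexed by $i = 1,\ldots,t$.

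Next, I would invoke the octahedral axiom. A pair $(f\colon X \to L,\, g\colon L \to M)$ with $\mathrm{Cone}(f) \simeq Y$ and $\mathrm{Cone}(g) \simeq Z$ determines, via the octahedron on the composite $gf\colon X \to M$, a canonical triangle $X \to M \to N \to X[1]$ together with a triangle $Y \to N \to Z \to Y[1]$, and conversely any such compatible pair of triangles arises in this way. Organizing this bijection by the isomorphism types $[L]$ on one side and $[N]$ on the other, and tracking the stabilizers carefully, yields a relation between the numerators on the two sides of the desired identity at the cost of an auxiliary ``cross-term''. This cross-term is precisely the discrepancy between the two presentations of a shared morphism space of the form $\Hom(M \oplus X, L)^{Y,Z[1]}_{L'[1]}$ versus $\Hom(L', M\oplus X)^{Y,Z[1]}_{L}$, quantified by Corollary~\ref{symmetry2}.

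The final step, and the main obstacle, is to check that after inserting the cross-term from Corollary~\ref{symmetry2}, the residual square-root correction products on the two sides coincide. This reduces, by Lemma~\ref{stablespace} and the long exact sequences obtained by applying $\Hom(-,L)$, $\Hom(Z[1],-)$, and their analogues to the relevant triangles, to a sequence of identities among alternating products of the form $\prod_{i=1}^{t} |\Hom(\cdot[i], \cdot)|^{(-1)^i}$. Here the hypotheses $[t] \cong 1_{\mc}$ and $t$ odd play their essential role: periodicity allows the products to fold into self-dual form, while the exponent $\tfrac{1}{2}$ is exactly what is needed for the symmetries to hold and for each $F_{ZL}^M$ to lie in $\bbq[v, v^{-1}]$. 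The conceptual content is straightforward, but the bookkeeping of these alternating products over the $t$-cycle is the principal challenge.
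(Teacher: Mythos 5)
Your proposal follows essentially the same route as the paper: both sides of the associativity identity are expanded using the two equivalent forms of $F$, the octahedral axiom (packaged in the paper as Proposition 3.5 of \cite{XX2006}) converts each double sum into a count over the common sets $\Hom(M\oplus X,L)^{Y,Z[1]}_{L'[1]}$ and $\Hom(L',M\oplus X)^{Y,Z[1]}_{L}$, and Corollary~\ref{symmetry2} (whose proof is exactly the Lemma~\ref{stablespace} long-exact-sequence argument you describe, where $[t]\cong 1_\mc$ and the exponent $\tfrac12$ do their work) matches the two expressions term by term. The only cosmetic difference is your choice of which factor to write in ``source'' versus ``target'' form, which does not affect the argument.
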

\begin{proof}
In order to simplify the notation, for $X,Y\in\mc,$ we set
$$\{X,Y\}=\prod_{i=1}^t|\Hom(X[i],Y)|^{(-1)^{i}}.$$ For $X,Y,Z \mbox{ and }M\in\mc,$ we will
prove that $u_Z*(u_X*u_Y)=(u_Z*u_X)*u_Y.$ It is equivalent to prove
$$
\sum_{[L]}F_{XY}^{L}F_{ZL}^{M}=\sum_{[L']}F_{ZX}^{L'}F_{L'Y}^{M}.
$$We know that
$$
\sum_{[L]}F_{XY}^{L}F_{ZL}^{M}=\sum_{[L]}\frac{|(X,L)_Y|}{|\mathrm{Aut}(X)|\{X,X\}^{\frac{1}{2}}}\{X,L\}^{\frac{1}{2}}\cdot
\frac{|(M,L)_{Z[1]}|}{|\mathrm{Aut}(L)|\{L,L\}}\{M,L\}^{\frac{1}{2}}
$$
By Proposition $3.5$ in~\cite{XX2006},
$\displaystyle\sum_{[L]}F_{XY}^{L}F_{ZL}^{M}$ is equal to
$$
\frac{1}{|\aut
X|\cdot\{X,X\}^{\frac{1}{2}}}\cdot\sum_{[L]}\sum_{[L']}\frac{|\Hom(M\oplus
X,L)^{Y,Z[1]}_{L'[1]}|}{|\aut L|}\cdot \frac{\{M\oplus
X,L\}^{\frac{1}{2}}}{\{L,L\}^{\frac{1}{2}}}.
$$
Dually, $\displaystyle\sum_{[L']}F_{ZX}^{L'}F_{L'Y}^{M}$  is equal
to
$$
\frac{1}{|\aut
X|\cdot\{X,X\}^{\frac{1}{2}}}\sum_{[L']}\cdot\sum_{[L]}\frac{|\Hom(L',M\oplus
X)^{Y,Z[1]}_{L}|}{|\aut L'|}\cdot \frac{\{L',M\oplus
X\}^{\frac{1}{2}}}{\{L',L'\}^{\frac{1}{2}}}.
$$
By Corollary \ref{symmetry2}, the proof of the theorem follows
immediately.
\end{proof}

\section{Hall algebras for 3-periodic relative derived categories of hereditary categories}
In this section, first of all, we will describe the relative derived
categories of $t$-cycle complexes over hereditary abelian
categories, which are the concrete examples of $t$--periodic
triangulated categories. Then, under the case when $t=3$, we will
provide a detailed description of the Hall algebra over a
$3$--periodic triangulated category.
\subsection{$t$-cycle complexes and relative homotopy categories}

Let $\mathcal{A}$ be an abelian category. Let $t\in \bbn$ and $t>1.$
Recall from~\cite{PX1997} that a $t$--cycle complex over
$\mathcal{A}$ is a complex $\dot{X}_t=(X^i, d_X^i )_{i\in
\mathbb{Z}}$ such that $X^i=X^j$  and $d_X^i=d_X^j$ for all $i, j
\in \mathbb{Z}$ with $i \equiv j (\mood\, t)$, where all $X^i$ are
objects in $\mathcal{A}$ and all $d_X^i: X^i \longrightarrow
X^{i+1}$ are morphisms in $\mathcal{A}$ with $ d_X^i d_X^{i+1}=0$.
For convenience, we can express $\dot{X}_t$ by using the form:
$$
\xymatrix{X_1\ar[r]^{d_X^1}&X_2\ar[r]^{d_X^2}&\cdots\ar[r]&X_t\ar[r]^{d_X^t}&X_1}.
$$  If
$\dot{X}_t$ and $\dot{Y}_t$ are two $t$--cycle complexes, a morphism
$\dot{f}_t: \dot{X}_t \longrightarrow \dot{Y}_t$ is a sequence of
morphisms $f^i: X^i \longrightarrow Y^i$ in $\mathcal{A}$ such that
$f^i=f^j$ for all $i, j \in \mathbb{Z}$ with $i \equiv j (\mood\,
t)$ and $d_X^i f^{i+1}=f^id_Y^i$ for all $i \in \mathbb{Z}$. These
morphisms are composed in an obvious way. So all $t$--cycle
complexes with these morphisms constitute an abelian category
$\mathcal{C}_t (\mathcal{A})$ which is called the $t$--cycle complex
category.

Let $\dot{f}_t, \dot{g}_t: \dot{X}_t \longrightarrow \dot{Y}_t$ be
two morphisms of $t$--cycle complexes. A relative homotopy
$\dot{s}_t$ from $\dot{f}_t$ to $\dot{g}_t$ a sequence of morphisms
$s^i: X^i \longrightarrow Y^{i-1}$ in $\mathcal{A}$ such that
$s^i=s^j$ for all $i, j \in \mathbb{Z}$ with $i \equiv j (\mood
\,t)$ and $f^i-g^i=s^i d_Y^{i-1}+d_X^i s^{i+1}$ for all $i\in
\mathbb{Z}$. Morphisms $\dot{f}_t$ and $\dot{g}_t$ are said to be
relatively homotopic if there exists a relative homotopy from
$\dot{f}_t$ to $\dot{g}_t$. Relative homotopy is an equivalence
relation compatible with composition of morphisms. So we can form a
new additive category $\mathcal{K}_t (\mathcal{A})$, called the
relative homotopy category of $t$--cycle complexes over
$\mathcal{A}$, by considering all $t$--cycle complexes as objects
and the additive group of relative homotopy classes of morphisms
from $\dot{X}_t$ to $\dot{Y}_t$ in $\mathcal{C}_t(\mathcal{A})$ as
the group of morphisms from $\dot{X}_t$ to $\dot{Y}_t$ in
$\mathcal{K}_t(\mathcal{A})$. As in usual complex categories, one
can define quasi-isomorphisms in $\mathcal{C}_t(\mathcal{A})$ and
$\mathcal{K}_t(\mathcal{A})$. Localizing $\mathcal{K}_t
(\mathcal{A})$ with respect to the set of all quasi-isomorphisms,
one can get an additive category, denoted by $\mathcal{D}_t
(\mathcal{A})$, called the relative derived category of $t$--cycle
complexes over $\mathcal{A}$ (see \cite[Section 3.2]{LP}). As in the
appendix of \cite{PX1997}, $\mathcal{K}_t(\mathcal{A})$ is a
triangulated category with the translation functor $[1]$ which is
the stable category of a Frobenius category. In the same way,
$\md_t(\mathcal{A})$ is also a triangulated category with the
translation functor $[1]$. As usual we denote by
$\mathcal{C}^b(\mathcal{A})$ the  category of bounded complexes over
$\mathcal{A}$, and by $\mathcal{K}^b(\mathcal{A})$  the homotopy
category of bounded complexes over $\mathcal{A}$. Let
$\mathcal{D}^b(\mathcal{A})$ be the derived category of
$\mathcal{A}$ obtained from $\mathcal{K}^b(\mathcal{A})$  by
localizing with respect to the set of quasi--isomorphisms. Then
$\mathcal{K}^b(\mathcal{A})$ and $\mathcal{D}^b(\mathcal{A})$ are
all triangulated categories with the translation functor--the shift
functor $[1]$.

As in \cite{PX1997}, we can define a functor $F:
\mathcal{C}^b(\mathcal{A}) \longrightarrow
\mathcal{C}_t(\mathcal{A})$ as follows. For $\dot{X}=(X^i, d_X^i)
\in \mathcal{C}^b(\mathcal{A})$, set $F\dot{X}=((F\dot{X})^i,
d_{F\dot{X}}^i)$ where $F\dot{X}=\oplus_{m\in \mathbb{Z}}X^{i+mt}
\text{ and } d_{F\dot{X}}^i=(d_{sm}^i)_{s,m\in \mathbb{Z}}$ such
that $d_{sm}^i: X^{i+st}\longrightarrow X^{(i+1)+mt}$ with
$d_{sm}^i=0$ for $s\neq m$ and $d_{ss}^i=d_X^i$ for all $s\in
\mathbb{Z}$. For $\dot{f}=(f^i):\dot{X} \longrightarrow \dot{Y}$ in
$\mathcal{C}^b(\mathcal{A})$, set $F(\dot{f})=(g^i)_{i\in
\mathbb{Z}}: F\dot{X} \longrightarrow F\dot{Y}$ where
$g^i=(g_{sm}^i)_{s,m\in\mathbb{Z}}$ such that $g_{sm}^i: X^{i+st}
\longrightarrow Y^{i+mt}$ with $g_{sm}^i=0$ for $s\neq m$ and
$g_{ss}^i=f^i$ for all $s\in \mathbb{Z}$. It is not difficult to
check that $F$ is a well--defined functor.

It is not hard to verify that $F$ is a Galois covering with the
Galois group $G=\langle[1]^t\rangle$ and $F$ is exact, i.e. $F$
sends a chainwise split exact sequence to a chainwise split exact
sequence. In \cite{PX1997}, the authors proved that $F$ induces a
Galois covering from $\mathcal{K}^b(\mathcal{A})$ to
$\mathcal{K}_t(\mathcal{A})$ with the Galois group
$\langle[1]^t\rangle$ and it is exact, i.e. it sends a triangle to a
triangle. In the same way, $F$ induces a Galois covering from
$\mathcal{D}^b(\mathcal{A})$ to $\mathcal{D}_t(\mathcal{A})$,
denoted still by $F.$ For any $X, Y\in \mathcal{A}$, let
$\tilde{X}:=F(X)$ and $\tilde{Y}:=F(Y)$, we have
\begin{equation}\label{isomorphism}
\Hom_{\md_t(\mathcal{A})}(\tilde{X}, \tilde{Y})\cong
\bigoplus_{F(X')=\tilde{X}}\Hom(X', Y)\cong
\bigoplus_{F(Y')=\tilde{Y}}\Hom(X, Y').
\end{equation}

The following proposition is similar to Proposition $3.2$
in~\cite{LP}.
\begin{Prop}\label{indecomposable}
Let $\mathcal{A}$ be a hereditary (abelian) category, i.e.,
$\mathrm{Ext}^i(-, -)=0$ for $i>1$ and $F:
\md^b(\mathcal{A})\rightarrow \md_t(\mathcal{A})$ be the Galois
covering functor for $t\in\bbn$ and $t>1$. Then $F$ is dense.
\end{Prop}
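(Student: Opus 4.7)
The strategy is to reduce density to the decomposition of every object of $\md_t(\mathcal{A})$ as a direct sum of stalk $t$-cycle complexes---a $t$-cycle analogue of the classical formality of bounded complexes over a hereditary abelian category. Write $C_i(A)$ for the stalk $t$-cycle complex with $A\in\mathcal{A}$ in position $i$ and zeros elsewhere. Then $C_i(A)\cong F(A[-i+1])$, so every direct sum of stalk $t$-cycle complexes automatically lies in the essential image of $F$. For $\dot{X}_t\in\md_t(\mathcal{A})$, set $H^i(\dot{X}_t):=\ker d_X^i/\mathrm{im}\,d_X^{i-1}$ for $i=1,\dots,t$ (indices modulo $t$). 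The key claim is
$$\dot{X}_t\;\cong\;\bigoplus_{i=1}^t C_i(H^i(\dot{X}_t))\quad\text{in }\md_t(\mathcal{A}),$$
since together with the previous observation this yields $\dot{X}_t\cong F\bigl(\bigoplus_{i=1}^t H^i(\dot{X}_t)[-i+1]\bigr)$ and hence density of $F$.

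I would prove the claim by induction on the number of nonzero $H^i(\dot{X}_t)$. The base case---all cohomology zero---reduces to the fact that an acyclic $t$-cycle complex is zero in $\md_t(\mathcal{A})$, which is immediate from its definition as a localisation at quasi-isomorphisms. For the inductive step, pick $i$ with $H^i\neq 0$, and use the hereditary hypothesis (via a length-one projective resolution of $H^i$, or directly via the vanishing of the relevant higher Ext obstruction) to construct a morphism $\varphi\colon C_i(H^i)\to\dot{X}_t$ in $\md_t(\mathcal{A})$ that induces the identity on cohomology in position $i$. Complete $\varphi$ to a distinguished triangle
$$C_i(H^i)\xrightarrow{\varphi}\dot{X}_t\to\dot{X}_t''\to C_i(H^i)[1];$$
the long exact sequence of cohomology then shows $H^i(\dot{X}_t'')=0$ while $H^j(\dot{X}_t'')=H^j(\dot{X}_t)$ for $j\neq i$, so $\dot{X}_t''$ has strictly fewer nonzero cohomology positions. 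By the inductive hypothesis $\dot{X}_t''\cong\bigoplus_{j\neq i}C_j(H^j(\dot{X}_t))$, and it remains to split the above triangle.

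The main obstacle is this inductive step. In contrast with the $\md^b(\mathcal{A})$ situation, the cyclic ordering of positions in $\md_t(\mathcal{A})$ prevents a classical truncation-based argument, so one must descend to $\mathcal{A}$ itself and exploit $\ext^{\geq 2}_{\mathcal{A}}=0$ at two points: (i) constructing $\varphi$, which amounts to lifting the cohomology class $H^i$ through the generally non-split short exact sequence $0\to \mathrm{im}\,d_X^{i-1}\to\ker d_X^i\to H^i\to 0$, possible because the obstruction to such a lift sits in a higher Ext group of $\mathcal{A}$; and (ii) splitting the resulting triangle, where the obstruction again reduces to a higher Ext class in $\mathcal{A}$ that vanishes by hereditary. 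Once these two hereditary vanishings are established the induction closes and density of $F$ follows at once from the preliminary observation that stalk $t$-cycle complexes lie in the image of $F$.
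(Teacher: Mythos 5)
Your overall strategy (decompose every $t$-cycle complex as the direct sum of its cohomology stalks, then observe that stalks lie in the essential image of $F$) is a legitimate route and genuinely different from the paper's. The paper instead cuts the cycle open at one position: it lifts the single extension $0\to\mathrm{Im}f_1\to X_2\to\mathrm{Coker}f_1\to0$ along the epimorphism $X_1\twoheadrightarrow\mathrm{Im}f_1$ (possible because the obstruction lies in $\mathrm{Ext}^2(\mathrm{Coker}f_1,\mathrm{Ker}f_1)=0$), and thereby exhibits $\dot{X}$ directly as $F$ of an explicit bounded complex of length $t+1$, with no need for formality or a stalk decomposition. Your plan could in principle be made to work, but both of the places where you invoke ``a higher Ext obstruction that vanishes by hereditarity'' are incorrect as stated, and they are precisely the places where the cyclic structure differs from $\md^b(\mathcal{A})$.

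First, the obstruction to splitting $0\to\mathrm{Im}\,d_X^{i-1}\to\mathrm{Ker}\,d_X^i\to H^i\to0$ lies in $\mathrm{Ext}^1_{\mathcal{A}}(H^i,\mathrm{Im}\,d_X^{i-1})$, which does not vanish for hereditary $\mathcal{A}$; and a general hereditary abelian category need not have enough projectives, so ``a length-one projective resolution of $H^i$'' is not available either. To get $\varphi$ one must use a roof: lift the extension along the epimorphism $X^{i-1}\twoheadrightarrow\mathrm{Im}\,d_X^{i-1}$ (here the obstruction really is in $\mathrm{Ext}^2(H^i,\mathrm{Ker}\,d_X^{i-1})=0$) to obtain $0\to X^{i-1}\to E\to H^i\to0$ mapping to the original sequence; the two-term complex $X^{i-1}\hookrightarrow E$ is quasi-isomorphic to $C_i(H^i)$ and admits a strict chain map to $\dot{X}_t$. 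This is exactly the lifting the paper performs. Second, for the splitting of your triangle: since stalk complexes are in the image of $F$, the covering isomorphism gives $\Hom_{\md_t(\mathcal{A})}(C_j(A),C_i(B)[1])\cong\bigoplus_{m}\mathrm{Ext}^{\,j-i+1+mt}_{\mathcal{A}}(A,B)$, which for hereditary $\mathcal{A}$ vanishes for all $j$ \emph{except} $j\equiv i\ (\mathrm{mod}\ t)$ (an $\mathrm{Ext}^1$) and $j\equiv i-1\ (\mathrm{mod}\ t)$ (a $\Hom_{\mathcal{A}}$). Hence the connecting morphism $\bigoplus_{j\neq i}C_j(H^j)\to C_i(H^i)[1]$ has a component living in $\Hom_{\mathcal{A}}(H^{i-1},H^i)$; this is not a higher Ext class and is not killed by hereditarity. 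It does vanish, but for a different reason: it is identified with a component of the connecting map in the cohomology long exact sequence of the triangle, which is zero because $H^i(\varphi)$ is an isomorphism. Without supplying these two arguments the induction does not close, so as written the proof has a genuine gap at both key steps.
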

\begin{proof}
Given a $t$-cycle complex $\dot{X}$ with the form as follows:
$$\xymatrix{X_1\ar[r]^{f_1}&X_2\ar[r]^{f_2}&X_3\ar[r]^{f_3}&\cdots\ar[r]&X_{t}\ar[r]^{f_t}&X_1}.$$
For $f_1: X_1\rightarrow X_2$, there exists $L\in \mathcal{A}$ such
that the following diagram
$$
\xymatrix{&\mathrm{Ker}f_1\ar@{=}[r]\ar[d]^{i_1}&\mathrm{Ker}f_1\ar[d]^{i_2}&&\\
0\ar[r]&X_1\ar[r]^{g_1}\ar[d]^{h_1}&L\ar[r]\ar[d]^{h_2}&\mathrm{Coker}f_1\ar[r]\ar@{=}[d]&0\\
0\ar[r]&\mathrm{Im}f_1\ar[r]^{g_2}&X_2\ar[r]&\mathrm{Coker}f_1\ar[r]&0}
$$
is commutative.

This diagram induces a short exact sequence in $\mathcal{C}_t
(\mathcal{A})$:
$$
\xymatrix@C=0.7cm{0\ar[r]\ar[d]&0\ar[r]\ar[d]&0\ar[r]\ar[d]&\cdots\ar[r]&0\ar[r]\ar[d]&0\ar[d]\\
\mathrm{Ker}f_1\ar@{=}[r]\ar[d]_{\left(
                                             \begin{array}{cc}
                                               -i_1 & 1 \\
                                             \end{array}
                                           \right)
}&\mathrm{Ker}f_1\ar[d]^{i_2}\ar[r]&0\ar[r]\ar[d]&\cdots\ar[r]&0\ar[r]\ar[d]&\mathrm{Ker}f_1\ar[d]^{\left(
                                             \begin{array}{cc}
                                               -i_1 & 1 \\
                                             \end{array}
                                           \right)
}\\
X_1\oplus \mathrm{Ker}f_1\ar[r]^-{\left(
                                             \begin{array}{c}
                                               g_1 \\
                                               0 \\
                                             \end{array}
                                           \right)
}\ar[d]_{\left(
                                                         \begin{array}{c}
                                                           1 \\
                                                          i_1 \\
                                                         \end{array}
                                                       \right)
}&L\ar[r]^{f_2}\ar[d]^{h_2}&X_3\ar[r]^{f_3}\ar@{=}[d]&\cdots\ar[r]&X_t\ar[r]^-{
                                                                                  (0\quad f_t)    }\ar@{=}[d]&X_1\oplus
\mathrm{Ker}f_1\ar[d]^{\left(
                                                         \begin{array}{c}
                                                           1 \\
                                                          i_1 \\
                                                         \end{array}
                                                       \right)
}\\
X_1\ar[r]^{f_1}\ar[d]&X_2\ar[r]^{f_2}\ar[d]&X_3\ar[r]^{f_3}\ar[d]&\cdots\ar[r]&X_t\ar[r]^{f_t}\ar[d]&X_1\ar[d]\\
0\ar[r]&0\ar[r]&0\ar[r]&\cdots\ar[r]&0\ar[r]&0}
$$
Note that since $\mathrm{Im}f_t\subseteq \mathrm{Ker}f_1$, we can
write $f_t: X_t\rightarrow \mathrm{Ker}f_1$ without causing any
confusion. Since the $t$--cycle complex on line $2$ is contractible,
two $t$--cycle complexes on line $3$ and $4$ are quasi-isomorphic to
each other. Let $X^{\bullet}\in \md^b(\mathcal{A})$ be the complex
$$
\xymatrix{\cdots
\ar[r]&0\ar[r]&X_1\ar[r]^{f_1}&\cdots\ar[r]&X_t\ar[r]^-{f_t}&\mathrm{Ker}f_1\ar[r]&0\ar[r]&\cdots}.
$$
Then $F(X^{\bullet})$ is just the $t$--cycle complex on line $3$ and
then isomorphic to $\dot{X}$ in $\md_t(\mathcal{A}).$ Hence, $F$ is
dense.
\end{proof}

\begin{remark} Now let $\Lambda$ be a basic finite dimensional associative
algebra with unity over a field. Let $\textrm{mod}\, \Lambda$ be the
category of all finitely generated (right) $\Lambda$--modules and
$\mathcal{P}$ the full subcategory of $\textrm{mod}\, \Lambda$ with
projective $\Lambda$-modules as objects. For simplicity, we write
$\mathcal{D}^b(\Lambda)$ instead of $\mathcal{D}^b(\textrm{mod}\,
\Lambda)$ and call it the derived category of $\Lambda$. Let
$\mathcal{K}_t(\mathcal{P})$ be the relative homotopy category of
$t$-cycle complex over $\mathcal{P}$. If $\Lambda$ is hereditary,
then $\mathcal{D}^b(\Lambda)$  is a Galois cover of
$\mathcal{K}_t(\mathcal{P})$ with the Galois group
$\langle[1]^t\rangle$ and the covering functor is exact and dense,
where $[1]$ is the translation functor of the triangulated category
$\mathcal{D}^b(\Lambda)$. Under the covering functor
$F':\mathcal{D}^b(\Lambda) \longrightarrow
\mathcal{K}_t(\mathcal{P})$, $\textrm{mod}\, \Lambda$ can be fully
embedded in $\mathcal{K}_t(\mathcal{P})$. This full embedding may be
obtained directly by sending $\Lambda$--modules to the $t$--cycle
complexes over $\mathcal{P}$ which is naturally induced by their
projective resolutions. Similarly with Happel's discussions
in~\cite{Happel}, one can prove that $\mathcal{K}_t(\mathcal{P})$
has Auslander--Reiten triangles. Specially in case $t=2$, the orbit
category $\mathcal{D}^b(\Lambda)/[1]^2$ is called the root category
of $\Lambda$ by Happel. Then $\mathcal{D}^b(\Lambda)/[1]^2
\backsimeq \mathcal{K}_2 (\mathcal{P})$ is a triangulated category
and the covering $F': \mathcal{D}^b(\Lambda)\longrightarrow
\mathcal{D}^b(\Lambda)/[1]^2$ is exact. Therefore,
$\mathcal{D}^b(\Lambda)/[1]^2 \simeq \md_2 (\textrm{mod}\,\Lambda)$.
\end{remark}

There is a full embedding of $\mathcal{A}$ into $\md_t
(\mathcal{A})$ which sends each object $X$ of $\mathcal{A}$ into the
stalk complex $ \xymatrix{
 0\ar[r]^-{0}&  \cdots \ar[r]^{0} &0\ar[r]^{0}    & X\ar[r]^{0}&0}$ be a
 $t$--cycle complex. We will identify this complex with $X$.

Let $\mathrm{ind}\mathcal{A}$ be the set of isomorphism classes of
indecomposable objects in $\mathcal{A}.$ Proposition
\ref{indecomposable} implies that if $\mathcal{A}$ is hereditary,
then the set of isomorphism classes of indecomposable objects in
$\md_t(\mathcal{A})$ is
$$
\{X[i]\mid i=0,1,\cdots,t-1, X \in \mathrm{ind}\mathcal{A}\}.
$$

In the following, we will only deal with the case when $t=3$ for
simplicity.

\begin{Prop}\label{indecomposable2}Let
$ \xymatrix{\dot{X}:\,\,
X_1\ar[r]^-{f_1}&X_2\ar[r]^{f_2}&X_3\ar[r]^{f_3}&X_1}$ be a
 $3$--cycle complex over a hereditary abelian category $\mathcal{A}$. Then in $\md_3(\mathcal{A}),$ $\dot{X}$ is isomorphic to
 $$
\xymatrix{\mathrm{Ker}f_1\ar[r]^-{0}&\mathrm{Ker}f_2/\mathrm{Im}f_1\ar[r]^-{0}&\mathrm{Coker}f_2\ar[r]^-{f}&\mathrm{Ker}f_1}
 $$
where $f$ is naturally induced by $f_3.$
\end{Prop}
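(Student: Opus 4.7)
The plan is to leverage the Galois covering functor $F\colon \md^b(\mathcal{A})\to\md_3(\mathcal{A})$ established in the preceding discussion, thereby reducing the claim to an identification in the derived category of a hereditary abelian category.

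First, I would apply the construction from the proof of Proposition \ref{indecomposable} to produce a preimage of $\dot X$ under $F$: the bounded complex
$$X^{\bullet}\colon\quad X_1\xrightarrow{f_1}X_2\xrightarrow{f_2}X_3\xrightarrow{f_3}\mathrm{Ker}\,f_1$$
placed in cohomological degrees $1$ through $4$, where the last differential is well-defined since $\mathrm{Im}\,f_3\subseteq\mathrm{Ker}\,f_1$ by $f_3f_1=0$. The same recipe applied to the target $3$-cycle complex (whose first differential already vanishes, so the relevant kernel is $\mathrm{Ker}\,f_1$ itself) gives the bounded complex
$$Y^{\bullet}\colon\quad\mathrm{Ker}\,f_1\xrightarrow{0}\mathrm{Ker}\,f_2/\mathrm{Im}\,f_1\xrightarrow{0}\mathrm{Coker}\,f_2\xrightarrow{f}\mathrm{Ker}\,f_1$$
in the same degrees. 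The construction in the proof of Proposition \ref{indecomposable} shows that $F(X^{\bullet})\cong\dot X$ and $F(Y^{\bullet})\cong\dot Y$ in $\md_3(\mathcal{A})$, where $\dot Y$ denotes the target complex in the statement.

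Next, I would compute the cohomology of both bounded complexes degree by degree. For $X^{\bullet}$ one obtains $H^1=\mathrm{Ker}\,f_1$, $H^2=\mathrm{Ker}\,f_2/\mathrm{Im}\,f_1$, $H^3=\mathrm{Ker}\,f_3/\mathrm{Im}\,f_2$ (using that the kernel of $f_3\colon X_3\to\mathrm{Ker}\,f_1$ coincides with the kernel of $f_3\colon X_3\to X_1$), and $H^4=\mathrm{Ker}\,f_1/\mathrm{Im}\,f_3$. For $Y^{\bullet}$ the first two are immediate from the vanishing of the differentials, while for the last two one uses that $f$ is induced by $f_3$, so $\mathrm{Ker}(f)=\mathrm{Ker}\,f_3/\mathrm{Im}\,f_2$ and $\mathrm{Im}(f)=\mathrm{Im}\,f_3$. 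Thus $X^{\bullet}$ and $Y^{\bullet}$ have identical cohomology in every degree.

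Since $\mathcal{A}$ is hereditary, every bounded complex over $\mathcal{A}$ decomposes in $\md^b(\mathcal{A})$ as the direct sum of the shifts of its cohomology, so $X^{\bullet}\cong Y^{\bullet}$ in $\md^b(\mathcal{A})$. Applying $F$ yields $\dot X\cong F(X^{\bullet})\cong F(Y^{\bullet})\cong\dot Y$ in $\md_3(\mathcal{A})$, as claimed. The main step that warrants care is the cohomology calculation at degree $3$ for $Y^{\bullet}$, where one must carefully track how the induced map $f$ inherits its kernel and image from $f_3$ via the chain relations $f_2f_3=0$ and $f_3f_1=0$; all remaining steps are formal consequences of the hereditary decomposition in the derived category and the functoriality of $F$.
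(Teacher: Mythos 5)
Your overall strategy --- lift to $\md^b(\mathcal{A})$ through the covering functor $F$, compare cohomologies, invoke the hereditary splitting of complexes, and push back down --- is genuinely different from the paper's proof (which works directly in $\mathcal{C}_3(\mathcal{A})$ via two short exact sequences of $3$-cycle complexes and a triangle that splits because $\Hom_{\md_3(\mathcal{A})}(\mathrm{Coker}f_2[1],\mathrm{Ker}f_2/\mathrm{Im}f_1)=0$). However, there is a genuine gap at your very first step: for the complex $X^{\bullet}=(X_1\xrightarrow{f_1}X_2\xrightarrow{f_2}X_3\xrightarrow{f_3}\mathrm{Ker}f_1)$ placed in degrees $1$ through $4$, it is \emph{not} true that $F(X^{\bullet})\cong\dot X$. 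By definition $F$ takes the direct sum over each residue class modulo $3$, so degrees $1$ and $4$ collide and $F(X^{\bullet})$ has $X_1\oplus\mathrm{Ker}f_1$ in position $1$; the evident chain map $F(X^{\bullet})\to\dot X$ (given by $(x,k)\mapsto x+k$ in position $1$ and the identity elsewhere) has kernel the stalk complex $\mathrm{Ker}f_1$ concentrated in position $1$, which is not acyclic. Concretely, if $\dot X$ is the stalk complex $X\to 0\to 0$, your $X^{\bullet}$ is $X\to 0\to 0\to X$ with zero differentials and $F(X^{\bullet})$ is the stalk complex $X\oplus X$, which is not isomorphic to $\dot X$. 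This is precisely why the proof of Proposition \ref{indecomposable} does not use $X_2$ itself but an object $L$ fitting into $0\to X_1\to L\to\mathrm{Coker}f_1\to 0$ lifting $0\to\mathrm{Im}f_1\to X_2\to\mathrm{Coker}f_1\to 0$ (such a lift exists because $\mathcal{A}$ is hereditary): with $L$ in place of $X_2$ the kernel of the comparison map becomes the contractible complex $\mathrm{Ker}f_1\xrightarrow{\;1\;}\mathrm{Ker}f_1$, and only then does one get $F(X^{\bullet})\cong\dot X$. Note that the corrected lift begins with a monomorphism $X_1\to L$, so its $H^1$ is $0$ rather than $\mathrm{Ker}f_1$, and your cohomology table changes accordingly.

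The argument is repairable along your own lines: either use the corrected lifts of $\dot X$ and of the target complex (both then have cohomology $0$, $\mathrm{Ker}f_2/\mathrm{Im}f_1$, $\mathrm{Ker}f_3/\mathrm{Im}f_2$, $\mathrm{Ker}f_1/\mathrm{Im}f_3$ in degrees $1,\dots,4$, and the hereditary decomposition finishes the proof as you intended), or keep the naive lifts and observe that $F(X^{\bullet})$ and $F(Y^{\bullet})$ differ from $\dot X$ and $\dot Y$ by the \emph{same} extra direct summand $\mathrm{Ker}f_1$ in position $1$, which can be cancelled by the Krull--Schmidt property. As written, though, the proof rests on a false intermediate claim and is incomplete.
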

\begin{proof}
In $\mathcal{C}_3(\mathcal{A})$, there exists a natural short exact
sequence involving $\dot{X}$ as follows:
\begin{equation}\label{ses1} \xymatrix{
&0\ar[r]\ar[d]&0\ar[r]\ar[d]&0\ar[r]\ar[d]&0\ar[d]\\
&\mathrm{Ker}f_1\ar[r]\ar[d]&0\ar[r]\ar[d]&0\ar[r]\ar[d]&\mathrm{Ker}f_1\ar[d]\\
&X_1\ar[r]^{f_1}\ar[d]&X_2\ar[r]^{f_2}\ar[d]&X_3\ar[r]^{f_3}\ar[d]&X_1\ar[d]\\
\dot{N}:&\mathrm{Im}f_1\ar[r]^{i}\ar[d]&X_2\ar[r]^{f_2}\ar[d]&X_3\ar[r]^{\pi}\ar[d]&\mathrm{Im}f_1\ar[d]\\
&0\ar[r]&0\ar[r]&0\ar[r]&0}
\end{equation}
where $i$ is the natural embedding and $\pi$ is induced by $f_3.$
The complex $\dot{N}$ in above diagram induces the following short
exact sequence in $\mathcal{C}_3(\mathcal{A})$:
$$
\xymatrix{&0\ar[r]\ar[d]&0\ar[r]\ar[d]&0\ar[r]\ar[d]&0\ar[d]\\
&\mathrm{Im}f_1\ar[r]\ar[d]&\mathrm{Im}f_1\ar[r]\ar[d]&0\ar[r]\ar[d]&\mathrm{Im}f_1\ar[d]\\
\dot{N}:&\mathrm{Im}f_1\ar[r]^{i}\ar[d]&X_2\ar[r]^{f_2}\ar[d]&X_3\ar[r]^{\pi}\ar[d]&\mathrm{Im}f_1\ar[d]\\
\dot{L}:&0\ar[r]\ar[d]&\mathrm{Coker}f_1 \ar[d]\ar[r]^{g} \ar[d]&X_3\ar[r]\ar[d]&0 \ar[d]\\
&0\ar[r]&0\ar[r]&0\ar[r]&0}
$$
where $g$ is induced by $f_2.$ Hence, $\dot{N}$ is isomorphic to the
bottom complex (denoted by $\dot{L}$) in $\md_3(\mathcal{A})$. Since
the complex $\xymatrix{0\ar[r]&\mathrm{Im}f_2\ar[r]&X_3\ar[r]&0}$ is
isomorphic to $\mathrm{Coker}f_2[1]$ in $\md_3(\mathcal{A})$, we
obtain a triangle in $\md_3(\mathcal{A})$
$$
\xymatrix{\mathrm{Ker}f_2/\mathrm{Im}f_1[2]\ar[r]&\dot{L}\ar[r]&\mathrm{Coker}f_2[1]\ar[r]^{h}&\mathrm{Ker}f_2/\mathrm{Im}f_1}.
$$
By using isomorphisms in~\eqref{isomorphism}, we have
$$
\Hom_{\md_3(\mathcal{A})}(\mathrm{Coker}f_2[1],
\mathrm{Ker}f_2/\mathrm{Im}f_1)=0.
$$
Hence, $h=0$ and then the triangle is split. So $\dot{N}$ is
isomorphic to $\mathrm{Ker}f_2/\mathrm{Im}f_1[2]\oplus
\mathrm{Coker}f_2[1].$ By using the isomorphisms
in~\eqref{isomorphism} again, we deduce that the short exact
sequence \eqref{ses1} induces the triangle
$$
\xymatrix{\mathrm{Ker}f_1\ar[r]&\dot{X}\ar[r]&\mathrm{Ker}f_2/\mathrm{Im}f_1[2]\oplus
\mathrm{Coker}f_2[1]\ar[r]^-{t}&\mathrm{Ker}f_1[1]}
$$
where $t=\left(
           \begin{array}{c}
             0 \\
             f[1] \\
           \end{array}
         \right)
$ and $f$ is induced by $f_3.$ The proposition is proved.
\end{proof}
By using Proposition~\ref{indecomposable2}, we can easily decompose
any object in $\md_3(\mathcal{A})$ into the direct sum of
indecomposable objects. Indeed, for the given $3$-cycle complex $
\xymatrix{\dot{X}:\,\,
 X_1\ar[r]^-{f_1}&X_2\ar[r]^{f_2}&X_3\ar[r]^{f_3}&X_1}$,
if $X_2=0$, then in $\md_3(\mathcal{A})$, $\dot{X}$ is isomorphic to
$\xymatrix{0\ar[r]&\mathrm{Ker}f_2\ar[r]^{0}&\mathrm{Coker}f_2\ar[r]&0}$.
This implies that in $\md_3(\mathcal{A})$, the complex $\dot{X}$ in
Proposition \ref{indecomposable2} is isomorphic to
$$
\mathrm{Ker}f_2/\mathrm{Im}f_1[2]\oplus \mathrm{Ker}f[1]\oplus
\mathrm{Coker}f.
$$

Now let $\mathcal{A}$ be the category of all finitely generated
modules of a basic finite dimensional associative hereditary algebra
over a finite field $k$. Let $U$ be the associative and unital
$\bbq(v, v^{-1})$-algebra generated by the set
$$
\{u^{[i]}_{[X]}\mid i=0,1,2,  X\in \mathcal{A}\}
$$
with the following generating relations:
\begin{enumerate}
  \item for $n=0,1,2,$ $u_{[X]}^{[n]}\cdot u_{[Y]}^{[n]}=\displaystyle\sum_{[L]}F_{XY}^L\cdot
  u_{[L]}^{[n]}$; {\vspace{0.2cm}}
  \item for $n=0, 1,$ $u_{[X]}^{[n]}\cdot u_{[Y]}^{[n+1]}=\displaystyle\sum_{[K],[C]}F_{X,Y[1]}^{K[1]\oplus C}\cdot
  u_{[K]}^{[n+1]}\cdot u_{[C]}^{[n]}$; {\vspace{0.2cm}}
  \item $u_{[X]}^{[2]}\cdot u_{[Y]}^{[0]}=\displaystyle\sum_{[K],[C]}F_{X,Y[1]}^{K[1]\oplus C}\cdot
  u_{[K]}^{[0]}\cdot u_{[C]}^{[2]}$.
\end{enumerate}
\begin{theorem}\label{structure}
 Let $\mathcal{H}(\md_3(\mathcal{A}))$ be the
$\bbq(v, v^{-1})$-algebra defined in Theorem \ref{maintheorem123}.
Then there is an isomorphism of algebras from $U$ to
$\mathcal{H}(\md_3(\mathcal{A}))$.
\end{theorem}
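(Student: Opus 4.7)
The plan is to define an algebra homomorphism $\phi: U \to \mathcal{H}(\md_3(\mathcal{A}))$ on generators by $\phi(u^{[i]}_{[X]}) = u_{[X[i]]}$, verify that the three defining families of relations of $U$ are satisfied in $\mathcal{H}(\md_3(\mathcal{A}))$, and then establish bijectivity via a normal-form argument. Proposition \ref{indecomposable2} and the paragraph following it guarantee that every object of $\md_3(\mathcal{A})$ decomposes uniquely up to isomorphism as $X_0 \oplus X_1[1] \oplus X_2[2]$ with $X_i \in \mathcal{A}$, so the basis $\{u_{[N]} : N \in \md_3(\mathcal{A})\}$ of $\mathcal{H}(\md_3(\mathcal{A}))$ and the candidate normal-form monomials $u^{[0]}_{[X_0]} u^{[1]}_{[X_1]} u^{[2]}_{[X_2]}$ in $U$ are indexed by the same data.

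Relation $(1)$ is immediate because $[n]$ is an autoequivalence of $\md_3(\mathcal{A})$, giving $F^{L[n]}_{X[n], Y[n]} = F^L_{XY}$ for $X, Y, L \in \mathcal{A}$. For relations $(2)$ and $(3)$, I would expand $u_{[X[n]]} * u_{[Y[n+1]]} = \sum_{[M]} F^M_{X[n], Y[n+1]} u_{[M]}$ and classify triangles $X[n] \to M \to Y[n+1] \to X[n+1]$. Applying $[-n]$ yields a triangle $X \to M[-n] \to Y[1] \to X[1]$ in $\md_3(\mathcal{A})$, and the Galois-cover isomorphism \eqref{isomorphism} combined with the hereditariness of $\mathcal{A}$ identifies such triangles with morphisms $g: Y \to X$ in $\mathcal{A}$, giving $M[-n] \cong (\ker g)[1] \oplus \mathrm{coker}\, g$ and hence $M \cong K[n+1] \oplus C[n]$ where $K = \ker g$ and $C = \mathrm{coker}\, g$. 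A short calculation using \eqref{isomorphism}, heredity, and $[3] \cong 1_{\mc}$ shows $\Hom_{\md_3(\mathcal{A})}(C[n], K[n+2]) = 0$, so every triangle $K[n+1] \to L \to C[n] \to K[n+2]$ is split; consequently $u_{[K[n+1]]} * u_{[C[n]]}$ is a scalar multiple of $u_{[K[n+1] \oplus C[n]]}$, and matching the normalization factors reproduces exactly the right-hand side of relation $(2)$. Relation $(3)$ is the same argument with $n = 2$, using $3 \equiv 0 \pmod 3$.

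For bijectivity, the relations $(1), (2), (3)$ together allow any monomial in $U$ to be rewritten as a linear combination of normal-form monomials $u^{[0]}_A \cdot u^{[1]}_B \cdot u^{[2]}_C$; an induction on a suitable dimension invariant ensures the rewriting process terminates. Applying $\phi$ to such a normal form and iterating the analysis of step two, one obtains a sum whose leading term (under a suitable partial order on isomorphism classes) is an invertible scalar multiple of $u_{[A \oplus B[1] \oplus C[2]]}$, with the remaining terms indexed by isomorphism classes of strictly smaller complexity. Thus $\phi$ carries the normal-form spanning set of $U$ to a triangular system with invertible diagonal in the basis $\{u_{[N]}\}$ of $\mathcal{H}(\md_3(\mathcal{A}))$, yielding both surjectivity and injectivity.

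The main obstacle is the careful computation in step two: verifying that the square-root twist factors in $F^{K[n+1] \oplus C[n]}_{X[n], Y[n+1]}$ combine with the split-triangle normalization $F^{K[n+1] \oplus C[n]}_{K[n+1], C[n]}$ to produce exactly $F^{K[1] \oplus C}_{X, Y[1]}$ as prescribed by relation $(2)$. Each individual vanishing or identification of $\Hom$-groups is routine using the hereditary hypothesis and the periodicity $[3] \cong 1$, but tracking all exponents of the twist factors $\prod_{i=1}^{3} |\Hom(-[i], -)|^{(-1)^i/2}$ in the presence of the wrap-around requires a systematic application of Corollary \ref{symmetry2}.
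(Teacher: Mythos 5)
Your proposal follows essentially the same route as the paper: the same generator-to-generator map $u^{[i]}_{[X]}\mapsto u_{[X[i]]}$, verification of the defining relations via the splitting of cones forced by vanishing $\Hom$-groups in the hereditary Galois cover, and bijectivity by a normal-form plus triangularity argument with respect to a partial order on isomorphism classes. The only real difference is cosmetic: you order the normal-form monomials as $u^{[0]}u^{[1]}u^{[2]}$ while the paper uses $u^{[2]}_{[Z]}u^{[1]}_{[Y]}u^{[0]}_{[X]}$ (so you must invert both instances of relation $(2)$ by induction on the partial order, where the paper only inverts relation $(3)$), and you are in fact more explicit than the paper about checking that the defining relations hold in $\mathcal{H}(\md_3(\mathcal{A}))$, a step the paper dismisses as clear.
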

\begin{proof}
There is a natural linear map of $\bbq(v, v^{-1})$-spaces
$$
\Phi:U\rightarrow \mathcal{H}(\md_3(\mathcal{A}))
$$
defined by $\Phi(u^{[i]}_{[X]})=u_{[X[i]]}.$ It is clear that the
relations $(1), (2)$ and (3) in the Definition of $U$ are satisfied
in $\mathcal{H}(\md_3(\mathcal{A}))$. Hence, $\Phi$ is a
homomorphism of algebras. Now we define a partial order over the set
of isomorphism classes of objects in $\md_3(\mathcal{A})$. Any
object in $\md_3(\mathcal{A})$ is isomorphic to $X\oplus Y[1]\oplus
Z[2]$ for some objects $X, Y$ and $Z$ in $\mathcal{A}$. We say
$$[X_1\oplus Y_1[1]\oplus Z_1[2]]< [X_2\oplus Y_2[1]\oplus Z_2[2]]$$ if
$\udim X_1\leq \udim X_2, Y_1\leq \udim Y_2$ and $\udim X_3\leq
\udim Y_3$ and at least one is not the equation. We claim that
$\Phi$ is surjective. For any $X, Y$ and $Z$ in $\mathcal{A}$, we
have
\begin{equation}\label{expression2}
u_{[X\oplus Y[1]\oplus Z[2]]}=u_{Z[2]}\cdot u_{Y[1]}\cdot u_X
-\sum_{[K],[C]; K\neq X, C\neq Z}F_{Z, X[1]}^{K[1]\oplus
C}u_{[K\oplus Y[1]\oplus C[2]]}.
\end{equation}
It is clear that $[K\oplus Y[1]\oplus C[2]]< [X\oplus Y[1]\oplus
Z[2]]$ for any $K, C$ occurred  in the right hand of the above
equation since $K\neq Z$ and $C\neq Y$ imply that $\udim K<\udim Z$
and $\udim C<\udim Y.$ We use the induction on the partial order and
deduce that there exists $u\in U$ such that $\Phi(u)=u_{[X\oplus
Y[1]\oplus Z[2]]}.$ Hence, $\Phi$ is surjective. Next, we prove that
$\Phi$ is injective. For any $\mu\in U,$ it can be reformulated to
have the following form
\begin{equation}\label{expression}
\mu=\sum_{[X], [Y], [Z]}a_{XYZ}\cdot u_{[Z]}^{[2]}\cdot
u_{[Y]}^{[1]}\cdot u_{[X]}^{[0]}.
\end{equation}
where $a_{XYZ}\in \bbq(v, v^{-1})$. Indeed, it is enough to check
the case when $\mu$ is equal to $u_{[X]}^{[0]}\cdot u_{[Z]}^{[2]}$,
$u_{[X]}^{0}\cdot u_{[Y]}^{[1]}$ or $u_{[Y]}^{[1]}\cdot
u_{[Z]}^{[2]}$. By following the relation $(3)$ in the Definition of
$U$, we have
$$
u_{[X]}^{[0]}\cdot u_{[Z]}^{[2]}=u_{[Z]}^{[2]}\cdot
u_{[X]}^{[0]}-\sum_{[K], [C]; [K\oplus C[2]]<[X\oplus
Z[2]]}u_{[K]}^{[0]}\cdot u_{[C]}^{[2]}.
$$
By the induction on the partial order, we can rewrite
$u_{[X]}^{[0]}\cdot u_{[Z]}^{[2]}$ with the form~\eqref{expression}.
The discussions of $u_{[X]}^{0}\cdot u_{[Y]}^{[1]}$ and
$u_{[Y]}^{[1]}\cdot u_{[Z]}^{[2]}$ are similar. By using the
expression~\eqref{expression}, we assume that
$$
\Phi(\mu)=\sum_{[X], [Y], [Z]}a_{XYZ}\cdot u_{[Z[2]]}\cdot
u_{[Y[1]]}\cdot u_{[X]}=0.
$$
However, by using the expression \eqref{expression2}, we have
\begin{displaymath}
\begin{array}{lcl}
& &u_{[Z[2]]}\cdot u_{[Y[1]]}\cdot u_{[X]} \\
&=& u_{[X\oplus Y[1]\oplus Z[2]]}+\displaystyle\sum_{[K], [C];
[K\oplus Y[1]\oplus C[2]]<[X\oplus Y[1]\oplus Z[2]]}F_{Z,
X[1]}^{K[1]\oplus C}u_{[K\oplus Y[1]\oplus C[2]]}.
\end{array}
\end{displaymath}
Hence, we obtain
$$
\sum_{[X], [Y], [Z]}a_{XYZ}\cdot u_{[X\oplus Y[1]\oplus Z[2]]}=0.
$$
This implies $a_{XYZ}=0$ and then $\mu=0.$ The proof of the theorem
is completed.
\end{proof}

\begin{remark}
Since $X=X[3]$ for any $X\in \md_3(\mathcal{A})$, the Grothendieck
group of $\md_3(\mathcal{A})$ is $\mathbb{Z}_2$-graded.  It will be
of interest to deal with the $\mathbb{Z}_2$-graded Lie algebras
which arise from a Hall algebra over a $3$-periodic triangulated
category.

\end{remark}
\begin{center}
{\textbf{Acknowledgements.}}
\end{center}
Fan Xu is grateful to Claus Michael Ringel for his help and
hospitality while staying at University of Bielefeld as an Alexander
von Humboldt Foundation fellow.

\bibliographystyle{amsalpha}

\begin{thebibliography}{10}



\bibitem{CK2008}
P. Caldero, B. Keller, \emph{From triangulated categories to cluster
algebras}, Invent. Math. \textbf{172} (2008), no. 1, 169--211.


\bibitem{Green}
J. A. Green, \textit{Hall algebras, hereditary algebras and quantum
groups}, Invent.Math.\textbf{120} (1995), 361-377.

\bibitem{Hall} P.~Hall, \emph{The algebra of partitions}, in: Proceedings 4th Canadian Mathematical Congress, Banff, 1957, University
of Toronto Press, 1958, 147--159.


\bibitem{Happel} D.~Happel, \emph{On the derived category of a finite-dimensional
algebra}, Comment. Math. Helv. \textbf{62} (1987), 339--389.



\bibitem{Joyce} D.~Joyce,  \textit{Configurations in abelian categories. II. Ringel-Hall algebras}, Advances in Mathematics {\bf 210} (2007), 635-706.
\bibitem{Kashiwara} M.~Kashiwara, \emph{On crystal bases of the $q$-analogue of universal
enveloping algebras}, Duke Math.~J., 63(1991), no.~2, 465--516.


\bibitem{Kap1}
M. Kapranov, \emph{Eisenstein series and quantum affine algebras},
Algebraic geometry, 7. J. Math. Sci. (New York) \textbf{84}, (1997)
no. 5, 1311--1360.

\bibitem{Kap2}
M. Kapranov, \emph{ Heisenberg doubles and derived categories}, J.
Algebra \textbf{202}, (1998) no. 2, 712--744.

\bibitem{LP} Y.~Lin, L.~Peng, \emph{Elliptic Lie algebras and tubular algebras},
Adv. Math. \textbf{196} (2005), no.~2, 487--530.



\bibitem{Lusztig} G.~Lusztig, \textit{Canonical bases arising from quantized enveloping
algebras}, J. Amer. Math. Soc. \textbf{3} (1990), 447-498.
\bibitem{Lusztig2000} G.~Lusztig, \textit{Constructible functions on varieties attached to quivers}, in {Studies in memory of
Issai Schur}, 177--223, Progress in Mathematics  {\bf 210},
Birkh\"auser 2003.



\bibitem{Nakajima1998} H.~Nakajima, \textit{Quiver varieties and Kac-Moody algebras}, Duke Math. J. {\bf 91} (1998), 515--560.




\bibitem{PX1997} L.~Peng, J.~Xiao, \emph{Root categories and simple Lie algebras}, J.
Algebra. {\bf{198}} (1997), 19--56.


\bibitem{PX2000} L.~Peng and J.~Xiao, \emph{Triangulated categories and Kac-Moody algebras}, Invent. Math. {\bf 140} (2000), 563--603.



\bibitem{Ringel1990} C.~M.~Ringel, \emph{Hall algebras and quantum groups}, Invent. Math. {\bf 101} (1990), 583--592.



\bibitem{Schiffmann1} O.~Schiffmann, \emph{Lectures on Hall
algebras}, arXiv: math/0611617.

\bibitem{Schiffmann2} O.~Schiffmann, \textit{Lectures on canonical and crystal bases of Hall
algebras}, arXiv: 0910.4460.

\bibitem{SX} J.~Sheng, F.~Xu, \emph{Derived Hall algebras and lattice algebras}, to appear in Algebra
Colloquium.


\bibitem{Steinitz} E.~Steinitz, \emph{Zur Theorie der Abel'schen Gruppen} Jahresberichts der DMV 9 (1901), 80--85.

\bibitem{Toen2005} B.~To\"en, \textit{Derived Hall algebras}, Duke Math. J. {\bf 135} (2006), no. 3, 587--615.

\bibitem{Xiao95} J.~Xiao, \emph{Hall algebra in a root category}, SFB preprint 95-070,
Bielefeld University, 1995.
\bibitem{Xiao97} J. Xiao, \textit{Drinfeld double and Ringel-Green theory of Hall algebras}, J. Algebra {\bf 190} (1997), no. 1,
100--144.
\bibitem{XX2006} J. Xiao and F. Xu, \textit{Hall algebras associated to triangulated categories}, Duke Math. J. {\bf 143} (2008),
no. 2, 357--373.
\bibitem{Xu2007} F. Xu, \textit{Hall algebras associated to triangulated categories, II: almost
associativity}, arXiv:0710.5588.
\end{thebibliography}

\end{document}